\documentclass[12pt,reqno]{amsart}
\oddsidemargin = 1.5cm \evensidemargin = 1.5cm \textwidth =9in
\textheight =7.5in
\usepackage{amsfonts,amssymb,latexsym,amsmath, amsxtra, mathrsfs}
\usepackage{verbatim}
\usepackage{diagbox}

\pagestyle{myheadings}

\textheight=8.6 true in \textwidth=6.5 true in \hoffset=-0.8true in


\theoremstyle{plain}
\newtheorem{theorem}{Theorem}[section]
\newtheorem{corollary}[theorem]{Corollary}

\newtheorem{lemma}[theorem]{Lemma}
\newtheorem{proposition}[theorem]{Proposition}

\newtheorem*{theorem*}{Theorem}
\theoremstyle{definition}

\theoremstyle{remark}
\newtheorem*{remark}{Remark}

\numberwithin{equation}{section}

\newcommand{\N}{\mathbb N}
\newcommand{\Z}{\mathbb Z}



\def\({\left(}
\def\){\right)}
\def\<{\left<}
\def\>{\right>}

\newcommand{\Var}{\operatorname{Var}}

\renewcommand{\pmod}[1]{\  \,  \left(  \mathrm{mod} \,  #1 \right)}

\begin{document}
\allowdisplaybreaks
	
\begin{abstract}
In this article we extend a theorem of Andrews, Crippa, and Simon on
the asymptotic behavior of polynomials defined by a general class of
recursive equations. Here the polynomials are in the variable $q$,
and the recursive definition at step $n$ introduces a polynomial in
$n$. Our extension replaces the polynomial in $n$ with either an 
exponential or periodic function of $n$.
\end{abstract}

\title[$q$-series identities extending work of Andrews, Crippa, and Simon]
{Some $q$-series identities extending work of Andrews, Crippa, and Simon
on sums of divisors functions}

\thanks{The research of the first author is supported by the Alfried Krupp Prize for 
Young University Teachers of the Krupp foundation, and the research leading to these 
results receives funding from the European Research Council under the European Union's 
Seventh Framework Programme (FP/2007-2013) / ERC Grant agreement n. 335220 - AQSER}

\author[K. Bringmann]{Kathrin Bringmann}
\author[C. Jennings-Shaffer]{Chris  Jennings-Shaffer}
	
\address{Kathrin Bringmann, University of Cologne, Department of Mathematics and Computer Science, Weyertal 86-90, 50931 Cologne, Germany}
\email{kbringma@math.uni-koeln.de}
\address{Chris  Jennings-Shaffer, University of Cologne, Department of Mathematics and Computer Science, Weyertal 86-90, 50931 Cologne, Germany}
\email{cjenning@math.uni-koeln.de}
\maketitle

\section{Introduction and statement of results}
In 1993, Collenberg, Crippa, and Simon  \cite{CollenbergCrippaSimon1} found that the expected
value and variance of a certain random variable on acyclic digraphs can be
expressed in terms of the generating functions of the number of divisors
of an integer and the sum of divisors of an integer, respectively. Specifically,
the probability space is the collection of acyclic digraphs with vertices 
$1$, $2$, $\dotsc$, $k$ (for fixed $k$) and the probability that a
directed edge exists between any two vertices $i$ and $j$,
with $1\le i<j\le k$, is uniformly $1-q$ (for fixed $q$ with $0<q<1$). For each fixed $k$,
the random variable $\gamma_k^*$ is defined as the number of vertices reachable from
the vertex $1$. Among their results are the identities
\begin{gather*}
\lim_{k\rightarrow\infty} (k - E(\gamma_k^*))
=
	\sum_{n\ge1}  \sum_{d\mid n} q^n
,\qquad\qquad\qquad
\lim_{k\rightarrow\infty} \Var(\gamma_k^*)
=
	\sum_{n\ge1}  \sum_{d\mid n} dq^n
.
\end{gather*}

Shortly after, Andrews, Crippa, and Simon \cite{AndrewsCrippaSimon1} found
that the relevant proofs, which are based on the limiting behavior of certain
recursively defined polynomials, could be recast and directly handled with
$q$-series techniques. Along with a number of identities for $q$-hypergeometric
series related to various sums of powers of divisors functions, their work
also gave the limiting behavior of a large family of polynomials. To state their result, we use the standard notation, for $n\in\N_0\cup\{\infty\}$,
\begin{equation*}
(a;q)_n := \prod_{j=0}^{n-1} \left(1-aq^j\right).
\end{equation*}
and for $k\in\N$ we let $S_k(q)$ denote the generating functions for the sum of powers of divisors functions
\begin{gather*}
S_k(q) 
:= 
\sum_{n\ge1} \sigma_k(n)q^n
,\qquad\qquad
\textnormal{where }\sigma_{k}(n) 
:=
\sum_{d\mid n} d^k
.
\end{gather*}
The following is a slightly reworded statement of Theorem 3.1 of \cite{AndrewsCrippaSimon1}.

\begin{theorem}\label{3.1}
Suppose that $f(n):=\sum_{k\ge0}c_kn^k$ is a polynomial in $n$.
Let $a_n(q)$ be the polynomials in $q$ defined recursively by
\begin{align*}
a_n(q) &= f(n) + \left(1-q^{n-1}\right)a_{n-1}(q) \qquad \mbox{for }n\in\N,\qquad a_0(q) = 0.
\end{align*}
Then
\begin{align*}
\lim_{n\rightarrow\infty}\left( \sum_{1\le\ell\le n} f(\ell) - a_n(q) \right)
&=
\sum_{j\ge1} h_j \sum_{n\ge1} \frac{(-1)^{n-1}q^{\frac{n(n+1)}{2}}}{(1-q^n)^j(q;q)_n }
,
\end{align*}
with
\begin{gather*}
h_1 
= 
	c_0
,\qquad\qquad
h_j
=
	\sum_{\ell\ge j-1} (-1)^{\ell-j-1}\binom{\ell-1}{j-2}\ell!
	\sum_{k\ge\ell} c_k S(k,\ell),
\end{gather*}
where $S(k,\ell)$ are the Stirling numbers of the second kind.
In particular, if the $c_k$ are rational, then so are the $h_j$.
Furthermore, each 
\begin{gather*}
 \sum_{n\ge1} \frac{(-1)^{n-1}q^{\frac{n(n+1)}{2}}}{(1-q^n)^j(q;q)_n }
\end{gather*}
can be written as polynomial in $S_0(q),S_1(q),\dots,S_j(q)$ with
rational coefficients.
\end{theorem}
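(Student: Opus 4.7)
I would begin by iterating the defining recursion. Since $a_n(q) = f(n) + (1-q^{n-1})a_{n-1}(q)$, induction gives the closed form
\begin{equation*}
a_n(q) = \sum_{\ell=1}^n f(\ell)\,\frac{(q;q)_{n-1}}{(q;q)_{\ell-1}},
\end{equation*}
so that $\sum_{\ell=1}^n f(\ell) - a_n(q) = \sum_{\ell=1}^n f(\ell)\bigl(1 - (q;q)_{n-1}/(q;q)_{\ell-1}\bigr)$. Since $0<q<1$ (or, working formally as a power series in $q$, by $q$-adic convergence) and $f(\ell)$ grows only polynomially, one checks that the limit as $n\to\infty$ may be taken termwise, yielding
\begin{equation*}
\lim_{n\to\infty}\Bigl(\sum_{1\le\ell\le n} f(\ell) - a_n(q)\Bigr) = \sum_{\ell\ge 1} f(\ell)\bigl(1 - (q^\ell;q)_\infty\bigr).
\end{equation*}

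Next, I would apply Euler's identity $(q^\ell;q)_\infty = \sum_{n\ge 0} (-1)^n q^{\ell n + n(n-1)/2}/(q;q)_n$, which produces
\begin{equation*}
1 - (q^\ell;q)_\infty = \sum_{n\ge 1} \frac{(-1)^{n-1} q^{\ell n + n(n-1)/2}}{(q;q)_n}.
\end{equation*}
Substituting and interchanging the order of summation (justified as above), the limit becomes
\begin{equation*}
\sum_{n\ge 1}\frac{(-1)^{n-1}q^{n(n-1)/2}}{(q;q)_n}\sum_{\ell\ge 1} f(\ell)\,q^{\ell n}.
\end{equation*}
To evaluate the inner sum I would expand $\ell^k$ in the basis of falling factorials via Stirling numbers of the second kind and use the standard identity $\sum_{\ell\ge 0}\ell(\ell-1)\cdots(\ell-j+1)x^\ell = j!\,x^j/(1-x)^{j+1}$, obtaining
\begin{equation*}
\sum_{\ell\ge 1} f(\ell)q^{\ell n} = \frac{c_0\,q^n}{1-q^n} + \sum_{j\ge 1} \Bigl(\sum_{k\ge j} c_k S(k,j)\Bigr) \frac{j!\,q^{jn}}{(1-q^n)^{j+1}}.
\end{equation*}

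After factoring out $q^n$ (so that the exponent becomes $n(n+1)/2$, matching the target), it remains to convert each $q^{(j-1)n}/(1-q^n)^{j+1}$ into a sum of terms of the form $(1-q^n)^{-j'}$. I would write $q^{(j-1)n} = (1-(1-q^n))^{j-1}$ and expand by the binomial theorem:
\begin{equation*}
\frac{q^{(j-1)n}}{(1-q^n)^{j+1}} = \sum_{j'=2}^{j+1} (-1)^{j+1-j'}\binom{j-1}{j+1-j'}\frac{1}{(1-q^n)^{j'}}.
\end{equation*}
Collecting the coefficient of $(-1)^{n-1}q^{n(n+1)/2}/\bigl((1-q^n)^{j'}(q;q)_n\bigr)$ and using $\binom{\ell-1}{\ell+1-j'} = \binom{\ell-1}{j'-2}$ together with $(-1)^{\ell+1-j'} = (-1)^{\ell-j'-1}$ yields exactly the stated formula for $h_{j'}$ for $j'\ge 2$, while the leftover $c_0\,q^n/(1-q^n)$ piece directly gives $h_1 = c_0$. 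Rationality of the $h_j$ follows immediately since the Stirling numbers are integers and the $c_k$ are rational.

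For the final assertion, I would prove by induction on $j$ that $\sum_{n\ge 1}(-1)^{n-1}q^{n(n+1)/2}/((1-q^n)^j(q;q)_n)$ is a polynomial in $S_0,\dots,S_j$. The base case $j=1$ is Uchimura's classical identity $\sum_{n\ge 1}(-1)^{n-1}q^{n(n+1)/2}/((1-q^n)(q;q)_n) = S_0(q)$. For the inductive step, I would apply $f(\ell) = \ell^{j-1}$ to the already-established main identity: the left-hand side is explicitly expressible in terms of $S_0,\dots,S_{j-1}$ (since $\sum_{\ell\le n}\ell^{j-1}$ is a polynomial in $n$ and $a_n(q)$ a finite $q$-sum whose infinite-$q$-expansion limit contributes only known divisor sums), while the right-hand side expresses a linear combination of the target sums $j'=1,\dots,j$, with nonzero coefficient in front of the $j'=j$ term. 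Solving for the top term completes the induction. The main bookkeeping obstacle is handling signs and edge cases (particularly $j=1$ with the convention $\binom{-1}{-1}=1$) in the binomial rewriting step; conceptually everything else reduces to a clean iteration of Euler's identity and the geometric series.
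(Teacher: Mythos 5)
First, a point of reference: this paper does not prove Theorem \ref{3.1} at all --- it is quoted from Andrews, Crippa, and Simon --- so your argument can only be compared with the method used elsewhere in the paper (the generating function $A(x;q)=\sum_n a_n(q)x^n$, its functional equation, and Appell's comparison theorem). Your route to the main identity is genuinely different and more elementary: the closed form $a_n(q)=\sum_{\ell=1}^n f(\ell)\,(q;q)_{n-1}/(q;q)_{\ell-1}$, the termwise limit giving $\sum_{\ell\ge1}f(\ell)\left(1-(q^\ell;q)_\infty\right)$ (justified since $\left|1-\prod_{m\ge\ell}(1-q^m)\right|\ll |q|^\ell$ uniformly in the truncation, for $|q|<1$), the companion Euler expansion of $(q^\ell;q)_\infty$, the Stirling-number evaluation of $\sum_{\ell\ge1}f(\ell)q^{\ell n}$, and the binomial rewriting of $q^{(j-1)n}(1-q^n)^{-j-1}$ all check out; the reindexing $\binom{j-1}{j+1-j'}=\binom{j-1}{j'-2}$, $(-1)^{j+1-j'}=(-1)^{j-j'-1}$ reproduces exactly the stated $h_j$, with $h_1=c_0$ coming only from the $c_0$ piece, and the rationality claim is immediate. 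This part of your proposal is sound and avoids the functional-equation machinery entirely.

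The gap is in the final assertion, that each $T_j(q):=\sum_{n\ge1}\frac{(-1)^{n-1}q^{n(n+1)/2}}{(1-q^n)^j(q;q)_n}$ is a polynomial in $S_0,\dots,S_j$ with rational coefficients. Your induction applies the main identity with $f(\ell)=\ell^{j-1}$, which indeed gives $\lim_{n\to\infty}\bigl(\sum_{\ell\le n}\ell^{j-1}-a_n(q)\bigr)=\sum_{j'=1}^{j}h_{j'}T_{j'}$ with $h_j=(j-1)!\neq 0$; but you then assert that this left-hand side is ``explicitly expressible in terms of $S_0,\dots,S_{j-1}$'' with no independent evaluation of it. The only handle you have on that limit is the very identity you just invoked, which expresses it through $T_1,\dots,T_j$ --- so solving for $T_j$ presupposes exactly what is to be proved, and the step is circular. (The claim is not formal bookkeeping: for instance $T_2=\tfrac12\bigl(S_0+S_0^2+S_1\bigr)$, which nothing in your argument produces.) A genuine proof needs an independent closed evaluation; one natural route, consistent with this paper, is the identity $\sum_{n\ge1}\frac{(-1)^{n-1}q^{n(n+1)/2}}{(1-bq^n)(q;q)_n}=\frac{1}{1-b}-\frac{(q;q)_\infty}{(b;q)_\infty}$ (in substance Theorem \ref{Theorem:GeneralizationOne}, via the $q$-Gauss limit of Lemma \ref{lem:Cauchy}). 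Expanding both sides in powers of $b-1$, the left side yields a triangular system in $T_1,\dots,T_j$, while on the right the logarithmic expansion of $(q;q)_\infty/(bq;q)_\infty$ produces polynomials in the series $\sum_{m\ge1}\frac{q^{mi}}{(1-q^m)^i}$, each of which is a $\Q$-linear combination of $S_0,\dots,S_{i-1}$ because $\binom{r-1}{i-1}$ is a rational polynomial in $r$. Without an ingredient of this kind, the last statement of the theorem remains unproved in your proposal.
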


At the end of their article, Andrews, Crippa, and Simon posed the question
of determining a similar result when $f(n)$ is replaced by a periodic
function, and gave the following identity (without proof).  
If $f(n) = (-1)^n$, then
\begin{gather*}
\lim_{n\rightarrow\infty}\left( \sum_{1\le\ell\le n} f(\ell) - a_n(q) \right)
=
\sum_{j\ge1} (-1)^j q^{j^2}.
\end{gather*}
We give two extensions of Theorem \ref{3.1}. The first is when $f(n)=b^n$ and the
second is when $f(n)$ is periodic. Of course, the two overlap when $b$ is a root
of unity, such as in the example above. Our first theorem  for $f(n)=b^n$ is as follows.

\begin{theorem}\label{Theorem:GeneralizationOne}
Suppose that $b\in\mathbb{C}\setminus\{1\}$, and $a_n(q)$ is the sequence of polynomials in $q$ defined recursively by
\begin{align*}
a_n(q) &= b^n + \left(1-q^{n-1}\right)a_{n-1}(q) \qquad \mbox{for }n\in\N,\qquad a_0(q) = 0.
\end{align*}
Then, for $|q|<\min(|b|^{-1},1)$, we have
\begin{gather*}
\lim_{n\rightarrow\infty}\left( \sum_{1\le\ell\le n} b^\ell - a_n(q)\right)
=\lim_{n\rightarrow\infty}\left( \frac{b-b^{n+1}}{1-b} - a_n(q) \right)
=
\frac{b}{1-b} - \frac{b(q;q)_\infty}{(b;q)_\infty}.
\end{gather*}
\end{theorem}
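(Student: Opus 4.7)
The plan is to solve the recursion in closed form and reduce the limiting identity to Euler's $q$-exponential summation. An easy induction on $n$ starting from $a_0(q)=0$ yields
\[
a_n(q) \;=\; \sum_{k=1}^{n}b^k(q^k;q)_{n-k} \;=\; (q;q)_{n-1}\sum_{k=1}^{n}\frac{b^k}{(q;q)_{k-1}},
\]
where the second equality uses $(q^k;q)_{n-k}=(q;q)_{n-1}/(q;q)_{k-1}$. Combined with $\sum_{\ell=1}^n b^\ell=\sum_{k=1}^n b^k$, this rewrites the quantity of interest as the single finite sum
\[
\sum_{\ell=1}^n b^\ell - a_n(q) \;=\; \sum_{k=1}^n b^k\bigl(1-(q^k;q)_{n-k}\bigr).
\]

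I would then pass to the limit inside the sum by dominated convergence. The crucial uniform bound
\[
\bigl|1-(q^k;q)_{n-k}\bigr|\le\prod_{j\ge k}(1+|q|^j)-1\le\exp\!\left(\tfrac{|q|^k}{1-|q|}\right)-1\le C|q|^k,
\]
valid for all $n\ge k$ with some $C=C(q)$, dominates each summand by $C|bq|^k$, which is summable precisely under the hypothesis $|q|<|b|^{-1}$. Since each summand converges to $b^k\bigl(1-(q^k;q)_\infty\bigr)$ as $n\to\infty$, dominated convergence gives
\[
\lim_{n\to\infty}\Bigl(\sum_{\ell=1}^n b^\ell - a_n(q)\Bigr) \;=\; \sum_{k\ge 1}b^k\bigl(1-(q^k;q)_\infty\bigr).
\]

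Finally I would evaluate the right-hand side. Assume first that $|b|<1$: the series splits, and using $(q^k;q)_\infty=(q;q)_\infty/(q;q)_{k-1}$ with Euler's identity $\sum_{m\ge 0}z^m/(q;q)_m=1/(z;q)_\infty$ (valid for $|z|<1$) we obtain
\[
\sum_{k\ge 1}b^k - (q;q)_\infty\cdot b\sum_{m\ge 0}\frac{b^m}{(q;q)_m} \;=\; \frac{b}{1-b}-\frac{b(q;q)_\infty}{(b;q)_\infty},
\]
which is the claimed value. For the remaining range $1\le|b|<|q|^{-1}$, I would extend by analytic continuation: both sides are holomorphic in $b$ on the punctured disk $\{0<|b|<|q|^{-1}\}\setminus\{1\}$, since writing $(b;q)_\infty=(1-b)(bq;q)_\infty$ with $(bq;q)_\infty$ nowhere vanishing on that disk shows that the apparent poles at $b=1$ of the two terms on the right cancel. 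Agreement on $|b|<1$ then forces equality throughout. The main obstacle is this evaluation step, in recognizing $\sum_{k\ge 1}b^k(q^k;q)_\infty$ as an instance of Euler's identity and then dealing with $|b|\ge 1$ by analytic continuation; the closed form for $a_n(q)$ and the dominated convergence step are routine.
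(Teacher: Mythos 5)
Your proof is correct, but it takes a genuinely different route from the paper. You work directly with the closed form $a_n(q)=\sum_{k=1}^n b^k(q^k;q)_{n-k}$, telescope the difference into the single sum $\sum_{k=1}^n b^k\bigl(1-(q^k;q)_{n-k}\bigr)$, and pass to the limit termwise via a dominated-convergence (Tannery) argument with the bound $|1-(q^k;q)_{n-k}|\le C|q|^k$; you then evaluate $\sum_{k\ge1}b^k\bigl(1-(q^k;q)_\infty\bigr)$ by Euler's identity for $|b|<1$ and extend to $1\le|b|<|q|^{-1}$ by the identity theorem in $b$. The paper instead forms the generating function $A(x;q)=\sum_{n\ge0}a_n(q)x^n$, shows the limit exists by a Cauchy-sequence argument, identifies it with $A(q;q)$ via Appell's comparison theorem applied to $(1-x)B(x;q)=xA(xq;q)$, and evaluates $A(q;q)$ by iterating the functional equation $A(x;q)=\frac{F(x)}{1-x}-\frac{x}{1-x}A(xq;q)$ and invoking the limiting $q$-Gauss summation. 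Your route is more elementary and self-contained (no Appell, no ${}_2\phi_1$), and it produces the clean intermediate identity $\lim_{n\to\infty}\bigl(\sum_{\ell\le n}b^\ell-a_n(q)\bigr)=\sum_{k\ge1}b^k\bigl(1-(q^k;q)_\infty\bigr)$ as a bonus; the price is the extra analytic-continuation step, which the paper avoids because the $q$-Gauss evaluation covers the whole range of $b$ at once, and the paper's generating-function machinery is reused verbatim for the periodic case (Theorem 1.3), so it earns its keep there. Two cosmetic points: excluding $b=0$ from your continuation domain is unnecessary (both sides are holomorphic there, and $b=0$ is anyway covered by the $|b|<1$ computation), and the pole-cancellation remark at $b=1$ is not needed since $b=1$ is excluded from the statement; neither affects correctness.
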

\begin{remark}
We note that the case $b=1$ of Theorem \ref{Theorem:GeneralizationOne} is
covered by Theorem \ref{3.1}. In particular, when $b=1$,
\begin{gather*}
\lim_{n\rightarrow\infty} \left(n-a_n(q)\right) = S_0(q),
\end{gather*}
and in fact Collenberg, Crippa, and Simon 
\cite[equation (8)]{CollenbergCrippaSimon1} observed that
$a_k(q) = E(\gamma_k^*)$. 
\end{remark}

The following theorem gives the extension for when $f(n)$ is periodic.

\begin{theorem}\label{Theorem:GeneralizationTwo}
Suppose $f(n)$ is a periodic sequence with period $N$
and $a_n(q)$ is the sequence of polynomials in $q$ defined recursively by
\begin{align*}
a_n(q) &= f(n) +\left(1-q^{n-1}\right)a_{n-1}(q) \qquad \mbox{for }n\in\N,\qquad a_0(q) = 0.
\end{align*}
Setting
\begin{gather*}
c_k := \frac{1}{N}\sum_{1\le j\le N} f(j)\zeta_N^{(1-j)k},
\end{gather*}
we obtain for $|q|<1$,
\begin{gather*}
\lim_{n\rightarrow\infty}\left( \sum_{1\le\ell\le n} f(\ell) - a_n(q)\right)
=
	c_0S_0(q)
	-
	(q;q)_\infty
	\sum_{1\le k\le N-1} 
	\frac{c_k}{(\zeta_N^k;q)_\infty}
	+
	\sum_{1\le k\le N-1} \frac{c_k}{1-\zeta_N^k}
.
\end{gather*}
\end{theorem}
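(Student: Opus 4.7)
The plan is to reduce to Theorem \ref{Theorem:GeneralizationOne} via a discrete Fourier transform (DFT), exploiting the linearity of the recursion in its forcing sequence. Since $f$ has period $N$, orthogonality of the additive characters mod $N$ gives
\begin{gather*}
f(n) = \sum_{k=0}^{N-1} c_k \zeta_N^{(n-1)k} = c_0 + \sum_{k=1}^{N-1} c_k \zeta_N^{-k}\left(\zeta_N^k\right)^n,
\end{gather*}
with $c_k$ as defined in the theorem. Writing $a_n^{(b)}(q)$ for the solution of the recursion with $f(n)=b^n$, linearity of the recursion in $f$ yields, for every $n$,
\begin{gather*}
a_n(q) = c_0\, a_n^{(1)}(q) + \sum_{k=1}^{N-1} c_k \zeta_N^{-k}\, a_n^{(\zeta_N^k)}(q),
\qquad
\sum_{\ell=1}^n f(\ell) = c_0 n + \sum_{k=1}^{N-1} c_k \zeta_N^{-k} \sum_{\ell=1}^n \left(\zeta_N^k\right)^\ell.
\end{gather*}

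Next, I would subtract these two identities term by term and pass to the limit summand-by-summand. For each $k$ with $1\le k\le N-1$, applying Theorem \ref{Theorem:GeneralizationOne} with $b=\zeta_N^k$ (note $|\zeta_N^k|=1$, so the hypothesis $|q|<\min(|b|^{-1},1)$ reduces to $|q|<1$) gives
\begin{gather*}
\lim_{n\to\infty}\left( \sum_{\ell=1}^n \left(\zeta_N^k\right)^\ell - a_n^{(\zeta_N^k)}(q)\right)
= \frac{\zeta_N^k}{1-\zeta_N^k} - \frac{\zeta_N^k (q;q)_\infty}{(\zeta_N^k;q)_\infty}.
\end{gather*}
For $k=0$ one instead invokes Theorem \ref{3.1} (via the remark following Theorem \ref{Theorem:GeneralizationOne}), which yields $\lim_{n\to\infty}(n-a_n^{(1)}(q))=S_0(q)$. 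Multiplying the $k$-th limit by $c_k\zeta_N^{-k}$ produces factors $c_k$ and $c_k/(1-\zeta_N^k)$ respectively, and summing over $k$ gives exactly the right-hand side of the theorem.

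The only subtle point, and the one requiring care, is that the individual partial sums $\sum_{\ell=1}^n (\zeta_N^k)^\ell$ do not converge for $\zeta_N^k\neq 1$; so one must not split the limit naively. The argument above avoids this by passing to the limit only on the already-convergent differences $\sum_{\ell=1}^n b^\ell - a_n^{(b)}(q)$ supplied by Theorem \ref{Theorem:GeneralizationOne}; the DFT identity and linearity of the recursion are used at the level of finite $n$ before any limit is taken. Beyond this, the proof is essentially bookkeeping: verify the DFT inversion formula for $c_k$ (a direct application of $\sum_{k=0}^{N-1}\zeta_N^{(n-j)k}=N\mathbf{1}_{n\equiv j\,(N)}$), combine the $k=0$ and $k\ge 1$ contributions, and simplify the factors $\zeta_N^{\pm k}$.
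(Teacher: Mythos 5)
Your proof is correct, but it takes a genuinely different route from the paper. You Fourier-decompose $f(n)=c_0+\sum_{k=1}^{N-1}c_k\zeta_N^{-k}(\zeta_N^k)^n$ and use linearity of the recursion in its forcing term to write $a_n(q)$ as the corresponding finite linear combination of the solutions $a_n^{(b)}(q)$ with $b=\zeta_N^k$; since each grouped difference $\sum_{\ell\le n}b^\ell-a_n^{(b)}(q)$ converges by Theorem \ref{Theorem:GeneralizationOne} (and the $k=0$ term by Theorem \ref{3.1} via the remark, giving $c_0S_0(q)$), the finite sum of limits is legitimate, and the $\zeta_N^{\pm k}$ factors cancel to give exactly the stated right-hand side. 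The paper instead reruns the whole analytic apparatus of Theorem \ref{Theorem:GeneralizationOne}: closed form for $a_n(q)$, a growth bound, the Cauchy-sequence argument for existence of the limit, Appell's comparison theorem to identify the limit with $A(q;q)$, and then the same discrete Fourier expansion — but applied only to rewrite $F(x)=\sum_{k}c_kx/(1-\zeta_N^kx)$ — followed by evaluation of $A(q;q)$ term by term with Lemmas \ref{lem:S0} and \ref{lem:Cauchy}. Your reduction is shorter and makes the periodic case a formal corollary of the exponential case (the convergence and $q$-Gauss work being inherited from Theorem \ref{Theorem:GeneralizationOne} rather than repeated), at the cost of invoking the $b=1$ case from Theorem \ref{3.1}; the paper's version keeps the two proofs parallel and self-contained in the generating-function framework. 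Both yield the identity on the full region $|q|<1$, since $|\zeta_N^k|=1$ makes the hypothesis of Theorem \ref{Theorem:GeneralizationOne} reduce to $|q|<1$, as you note.
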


Since Theorem \ref{Theorem:GeneralizationTwo} does not explicitly demonstrate
that the coefficients of the resulting series are elements of
$f(1)\mathbb{Z}+f(2)\mathbb{Z}+\dotsb+f(N)\mathbb{Z}$, we give the following corollary.
\begin{corollary}\label{cor:recur}
Suppose that $f(n)$ is a periodic sequence with period $N$
and $a_n(q)$ is the sequence of polynomials in $q$ defined recursively by
\begin{align*}
a_n(q) &= f(n) + \left(1-q^{n-1}\right)a_{n-1}(q) \qquad \mbox{for }n\in\N,\qquad a_0(q) = 0.
\end{align*}
Then for $|q|<1$, 
\begin{gather*}
\lim_{n\rightarrow\infty}\left( \sum_{1\le\ell\le n} f(\ell) - a_n(q) \right)
=
	(q;q)_\infty
	\sum_{n\ge0}\frac{q^n}{(q;q)_n}
	\sum_{1\le j\le N} f(j) \left\lceil\frac{n+1-j}{N}\right\rceil
.
\end{gather*}
\end{corollary}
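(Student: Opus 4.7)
I would bypass Theorem \ref{Theorem:GeneralizationTwo} and work directly with the recursion, since the identity in the corollary has coefficients that should manifestly lie in the $\Z$-span of $f(1),\dots,f(N)$. The starting point is a closed form for $a_n(q)$: a short induction on $n\in\N$, using $(q^k;q)_{n-k} = (1-q^{n-1})(q^k;q)_{n-1-k}$ to extract the factor $(1-q^{n-1})$ required by the recursion at each step, gives
\begin{gather*}
a_n(q) = \sum_{k=1}^n f(k)\,(q^k;q)_{n-k}.
\end{gather*}

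Next, set $F(k) := \sum_{\ell=1}^k f(\ell)$ and apply Abel summation to this sum in the index $k$. The telescoping relation $(q^k;q)_{n-k} = (1-q^k)(q^{k+1};q)_{n-k-1}$ yields $(q^k;q)_{n-k}-(q^{k+1};q)_{n-k-1} = -q^k(q^{k+1};q)_{n-k-1}$, and together with the boundary contribution $F(n)\,(q^n;q)_0=F(n)=\sum_{\ell=1}^n f(\ell)$ this collapses the desired difference to
\begin{gather*}
\sum_{\ell=1}^n f(\ell) - a_n(q) = \sum_{k=1}^{n-1} F(k)\,q^k\,(q^{k+1};q)_{n-k-1}.
\end{gather*}
Passing to the limit $n\to\infty$ is then routine: $|F(k)|=O(k)$ and $(q^{k+1};q)_{n-k-1}$ is uniformly bounded for $|q|<1$, so dominated convergence applies, and using $(q^{k+1};q)_\infty = (q;q)_\infty/(q;q)_k$ together with $F(0)=0$ I obtain
\begin{gather*}
\lim_{n\to\infty}\left(\sum_{\ell=1}^n f(\ell)-a_n(q)\right) = (q;q)_\infty \sum_{k\ge 0}\frac{F(k)\,q^k}{(q;q)_k}.
\end{gather*}

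Finally, I would identify $F(k)$ combinatorially: for each $j\in\{1,\dots,N\}$ the number of $\ell\in[1,k]$ with $\ell$ having residue representative $j$ modulo $N$ equals $\lceil(k+1-j)/N\rceil$ (automatically zero when $k<j$, as one checks from the definition of the ceiling). Periodicity of $f$ then gives
\begin{gather*}
F(k) = \sum_{1\le j\le N} f(j)\left\lceil\frac{k+1-j}{N}\right\rceil,
\end{gather*}
and substituting into the previous display yields the corollary. The only substantive calculation is the Abel summation in the middle step; everything else is bookkeeping, and I do not anticipate a serious obstacle.
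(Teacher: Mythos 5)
Your proposal is correct, and it takes a genuinely different route from the paper. The paper deduces the corollary from Theorem \ref{Theorem:GeneralizationTwo}: it evaluates $\sum_{1\le k\le N-1} c_k/(1-\zeta_N^k)$ and rewrites $\sum_{1\le k\le N-1} c_k/(\zeta_N^k;q)_\infty$ via Lemma \ref{lem:rootid}, Euler's identity (Lemma \ref{lem:xqsum}), and Lemma \ref{lem:S0}, then substitutes back into the theorem; the counting of residues is hidden inside the root-of-unity identity. You instead bypass the theorem entirely: your closed form $a_n(q)=\sum_{k=1}^n f(k)(q^k;q)_{n-k}$ is exactly the paper's \eqref{Eq:Proof2ClosedFormForAn} in disguise, and your Abel summation gives the exact finite identity $\sum_{\ell=1}^n f(\ell)-a_n(q)=\sum_{k=1}^{n-1}F(k)\,q^k(q^{k+1};q)_{n-k-1}$ with $F(k)=\sum_{\ell\le k}f(\ell)$, after which Tannery/dominated convergence (justified, since $F(k)=O(k)$ and the partial products are bounded by $(-|q|;|q|)_\infty$) and the elementary count $F(k)=\sum_{1\le j\le N}f(j)\lceil (k+1-j)/N\rceil$ finish the argument. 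Each step checks out, including the ceiling identity for $k<j$. What your approach buys: it is self-contained and elementary (no roots of unity, no functional equation for $A(x;q)$, no Appell comparison theorem, no $q$-Gauss limit), it proves existence of the limit directly rather than via a Cauchy-sequence argument plus Proposition \ref{P:Appell}, and it makes transparent the motivating point that the coefficients lie in $f(1)\Z+\dotsb+f(N)\Z$. What it does not give is the product-form evaluation of Theorem \ref{Theorem:GeneralizationTwo} itself (the expression in terms of $S_0(q)$ and $(q;q)_\infty/(\zeta_N^k;q)_\infty$), which the paper's route obtains first and which is needed for examples like Corollary \ref{cor:imply}; conversely, combining your identity with the theorem recovers the paper's rewriting lemma as a byproduct.
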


The rest of the article is organized as follows. In Section 2, we give the 
additional relevant
notation, definitions, and preliminary identities. In Section 3, we prove
our two generalizations of Theorem \ref{3.1}, which are Theorems 
\ref{Theorem:GeneralizationOne} and \ref{Theorem:GeneralizationTwo}
and Corollary \ref{cor:recur}. 
In Section 4, we compute two examples, which are given as Corollaries \ref{cor:imply} and
\ref{cor:recur2}, one of which is the identity stated above for $f(n)=(-1)^n$.

\section{Preliminaries}

In this section, we recall some basic facts about $q$-series, which are required for this paper.
The following is Euler's identity (see \cite{GasperRahman1}, equation (II. 1)).
\begin{lemma}\label{lem:xqsum}
	We have  for $|x|<1$
	\begin{equation*}
	\sum_{n\geq 0} \frac{x^n}{(q;q)_n} = \frac{1}{(x;q)_\infty}.
	\end{equation*}
\end{lemma}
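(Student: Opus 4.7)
The plan is to establish Euler's identity by exhibiting a functional equation for the product side, converting it into a recursion on the Taylor coefficients, and then matching those coefficients against $1/(q;q)_n$.

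First, set $G(x):=\frac{1}{(x;q)_\infty}=\prod_{j\ge 0}\frac{1}{1-xq^j}$. Since $|q|<1$ this infinite product converges uniformly on compacta of $\{|x|<1\}$ and defines a holomorphic function there, so $G$ admits a Taylor expansion $G(x)=\sum_{n\ge 0}g_n x^n$ with $g_0=G(0)=1$. The key observation is the factorization $(x;q)_\infty=(1-x)(qx;q)_\infty$, which yields the functional equation
\begin{equation*}
(1-x)G(x)=G(qx).
\end{equation*}

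Next I would plug the power series into this functional equation. On the left,
\begin{equation*}
(1-x)\sum_{n\ge 0}g_n x^n=\sum_{n\ge 0}(g_n-g_{n-1})x^n,
\end{equation*}
with the convention $g_{-1}=0$, while on the right $G(qx)=\sum_{n\ge 0}g_n q^n x^n$. Comparing coefficients of $x^n$ gives $g_n-g_{n-1}=q^n g_n$, i.e.\ $g_n(1-q^n)=g_{n-1}$ for $n\ge 1$. Combined with $g_0=1$, a straightforward induction on $n$ produces $g_n=\frac{1}{(q;q)_n}$, which after substituting back into $G(x)=\sum_{n\ge 0}g_n x^n$ is precisely the claimed identity.

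There is essentially no major obstacle here: the only subtlety is justifying that expanding $1/(x;q)_\infty$ as a convergent power series is legitimate on the stated domain $|x|<1$ (and that coefficient-matching is valid), but this is immediate from the uniform convergence of the defining product for $|q|<1$. No separate treatment of convergence of the sum side is needed, since the recursion determines the coefficients uniquely from $g_0=1$.
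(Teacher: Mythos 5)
Your proof is correct. The paper itself offers no proof of this lemma: it is quoted as Euler's identity with a citation to Gasper and Rahman, equation (II.1), so there is nothing in the paper to compare against step by step. Your argument via the functional equation $(1-x)G(x)=G(qx)$ for $G(x)=\frac{1}{(x;q)_\infty}$, followed by coefficient comparison and the recursion $g_n(1-q^n)=g_{n-1}$ with $g_0=1$, is the standard self-contained derivation and is complete: the factorization $(x;q)_\infty=(1-x)(qx;q)_\infty$ is immediate, the analyticity of $G$ on $|x|<1$ (for $|q|<1$, the nearest pole is at $x=1$) justifies the Taylor expansion and the coefficient matching, and the induction closes the argument. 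One could quibble that you implicitly use $|q|<1$ without stating it as a hypothesis, but the paper does the same throughout, so this is consistent with the context.
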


We require the following representations of $S_0(q)$ as $q$-hypergeometric series (see \cite{Uchimura1}).
\begin{lemma}\label{lem:S0}
	We have
	\begin{align*}
	S_0(q)
	=
	(q;q)_\infty \sum_{n\ge0}\frac{nq^n}{(q;q)_n}
	= \sum_{n\geq 1} \frac{(-1)^{n+1} q^{\frac{n(n+1)}{2}}}{\left(1-q^n\right)(q;q)_n}.
	\end{align*}
	
\end{lemma}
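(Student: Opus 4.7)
The plan is to match each of the two claimed expressions against the Lambert-series form of $S_0(q)$. Swapping the order of summation in the divisor sum gives
\begin{equation*}
S_0(q) = \sum_{n\ge 1}\sigma_0(n)q^n = \sum_{m,k\ge 1}q^{mk} = \sum_{m\ge 1}\frac{q^m}{1-q^m},
\end{equation*}
which converges absolutely for $|q|<1$. Both of the identities in the lemma will be reduced to this single Lambert series.

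For the first equality, I would differentiate Euler's identity (Lemma \ref{lem:xqsum}) with respect to $x$. The left-hand side becomes $\sum_{n\ge 1}\frac{nx^{n-1}}{(q;q)_n}$, while the logarithmic derivative of $\frac{1}{(x;q)_\infty}$ produces $\frac{1}{(x;q)_\infty}\sum_{j\ge 0}\frac{q^j}{1-xq^j}$. Specializing $x=q$ and multiplying both sides by $q(q;q)_\infty$, the right-hand side collapses to $\sum_{j\ge 0}\frac{q^{j+1}}{1-q^{j+1}} = \sum_{m\ge 1}\frac{q^m}{1-q^m} = S_0(q)$, while the left-hand side becomes $(q;q)_\infty\sum_{n\ge 0}\frac{nq^n}{(q;q)_n}$ (the $n=0$ term vanishes). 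This yields the first representation.

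For the second equality, I would derive it from the first by expanding the tail $(q^{n+1};q)_\infty$. Using the factorization $(q;q)_\infty = (q;q)_n(q^{n+1};q)_\infty$, rewrite
\begin{equation*}
(q;q)_\infty \sum_{n\ge 0}\frac{nq^n}{(q;q)_n} = \sum_{n\ge 0}nq^n(q^{n+1};q)_\infty,
\end{equation*}
and then apply Euler's theorem $(z;q)_\infty = \sum_{k\ge 0}\frac{(-z)^k q^{\binom{k}{2}}}{(q;q)_k}$ with $z=q^{n+1}$. After interchanging the two summations and evaluating the geometric-type series $\sum_{n\ge 0}nq^{n(k+1)} = \frac{q^{k+1}}{(1-q^{k+1})^2}$, the double sum becomes
\begin{equation*}
\sum_{k\ge 0}\frac{(-1)^k\, q^{\binom{k+1}{2}+k+1}}{(q;q)_k\,(1-q^{k+1})^2}.
\end{equation*}
Shifting $m=k+1$ and using $(q;q)_{m-1}(1-q^m)=(q;q)_m$ converts this into $\sum_{m\ge 1}\frac{(-1)^{m-1}q^{m(m+1)/2}}{(1-q^m)(q;q)_m}$, which is the claimed expression. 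The main obstacle is justifying the interchange of summation and the shift of indices; however, since $|q|<1$ makes the double series absolutely convergent (the entries decay like $q^{k^2/2}$ in $k$), this is routine Fubini rather than a substantive hurdle.
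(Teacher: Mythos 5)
Your proof is correct, and it is worth noting that the paper itself offers no proof of this lemma at all: it is simply quoted with a citation to Uchimura \cite{Uchimura1}, so any complete argument is already more than the paper supplies. Both of your steps check out. Differentiating Lemma \ref{lem:xqsum} term by term and logarithmically differentiating the product is legitimate for $|x|,|q|<1$, and setting $x=q$ and multiplying by $q(q;q)_\infty$ does collapse the right-hand side to the Lambert series $\sum_{m\ge1}\frac{q^m}{1-q^m}=S_0(q)$. For the second identity, the expansion of $(q^{n+1};q)_\infty$ by the other Euler identity $(z;q)_\infty=\sum_{k\ge0}\frac{(-1)^kq^{\binom{k}{2}}z^k}{(q;q)_k}$ (which, note, is not stated in the paper --- only its reciprocal companion Lemma \ref{lem:xqsum} is --- so you should cite it, e.g.\ as (II.2) of \cite{GasperRahman1}), the evaluation $\sum_{n\ge0}nq^{n(k+1)}=\frac{q^{k+1}}{(1-q^{k+1})^2}$, and the reindexing $m=k+1$ with $(q;q)_{m-1}(1-q^m)=(q;q)_m$ all combine exactly as you say, since $\binom{k+1}{2}+k+1=\frac{m(m+1)}{2}$ and $(-1)^{m-1}=(-1)^{m+1}$; the Fubini justification is indeed routine because $|(q;q)_k|\ge(|q|;|q|)_\infty>0$ keeps the denominators bounded away from zero. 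A pleasant byproduct of your route is that it derives the second (Uchimura-type) representation from the first rather than treating the two independently.
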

We also use the following limiting case of the $q$-Gauss summation \cite[equation (II.8)]{GasperRahman1}, where
\begin{gather*}
{_2\phi_1}\left(a,b;c;q,x\right)
:=
	\sum_{n\geq 0}\frac{(a; q)_n(b; q)_n}{(q; q)_n (c; q)_n}x^n.
\end{gather*}
\begin{lemma}\label{lem:Cauchy}
	We have
	\begin{gather*}
	\lim_{a\rightarrow\infty}
	{_2\phi_1}\left(a,b;c;q,\frac{c}{ab}\right)
	=\frac{\left(\frac{c}{b};q\right)_\infty}{(c;q)_\infty}
	.
	\end{gather*}
\end{lemma}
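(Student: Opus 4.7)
The plan is to derive this lemma directly from the $q$-Gauss summation formula (equation (II.8) of \cite{GasperRahman1}), which states
\begin{gather*}
{_2\phi_1}\left(a,b;c;q,\frac{c}{ab}\right)
=
\frac{\left(\frac{c}{a};q\right)_\infty \left(\frac{c}{b};q\right)_\infty}{(c;q)_\infty \left(\frac{c}{ab};q\right)_\infty},
\end{gather*}
valid whenever $|c/(ab)|<1$, which certainly holds for all sufficiently large $|a|$ once $b$, $c$, and $q$ with $|q|<1$ are fixed. Since the left-hand side of the lemma is exactly this ${_2\phi_1}$ series, evaluated in closed form by (II.8), the proof reduces to computing the limit of the right-hand side as $a\to\infty$.

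For that limit, each individual factor $1-cq^j/a$ appearing in the product $(c/a;q)_\infty$ tends to $1$, and this convergence is uniform in $j$ (for example, for $|a|\ge 2|c|$ each such factor lies within a small fixed neighborhood of $1$). Hence $(c/a;q)_\infty\to (0;q)_\infty=1$, and the same reasoning applied to $(c/(ab);q)_\infty$ gives $(c/(ab);q)_\infty\to 1$. The remaining factors $(c/b;q)_\infty$ and $(c;q)_\infty$ are independent of $a$, so the right-hand side converges to $(c/b;q)_\infty/(c;q)_\infty$, which is the claimed value.

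There is essentially no obstacle beyond recognizing that the lemma is a direct limiting case of (II.8); the only care needed is the uniform-in-$j$ convergence of the factors in $(c/a;q)_\infty$ and $(c/(ab);q)_\infty$, which justifies passing the limit through the infinite product. Note that no interchange of limit and sum on the left-hand side is required, since the $q$-Gauss identity already provides a closed-form evaluation of the ${_2\phi_1}$ for each admissible $a$.
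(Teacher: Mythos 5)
Your proof is correct and follows exactly the route the paper intends: the paper states this lemma as ``the following limiting case of the $q$-Gauss summation \cite[equation (II.8)]{GasperRahman1}'' and offers no further argument, so your application of (II.8) followed by letting $a\to\infty$ in the closed-form product is precisely the intended derivation. The only added content in your write-up is the (correct) justification that $\left(\frac{c}{a};q\right)_\infty\to1$ and $\left(\frac{c}{ab};q\right)_\infty\to1$, which the paper leaves implicit.
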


For our two examples (Corollaries \ref{cor:imply} and \ref{cor:recur2}), we use 
without mention the following well-known product expansions, e.g. see \cite[Corollary 2.10]{Andrews1},
\begin{gather*}
\frac{(q;q)_\infty}{(-q;q)_\infty}
=
	\sum_{n\in\Z} (-1)^n q^{n^2} 
,\qquad\qquad\qquad
\frac{(q^2;q^2)_\infty}{(q;q^2)_\infty}
=
	\sum_{n\ge1} q^{\frac{n(n+1)}{2}} 
.
\end{gather*}

We also make use of a result that is sometimes referred to as Appell's Comparison Theorem,
which is common when dealing with limiting cases of functional equations and recurrences.
The following statement is a slight extension of Theorem 8.2 in \cite{Rud} to allow for complex coefficients.
\begin{proposition}\label{P:Appell}
	Suppose that $F(x)=\sum_{n\ge0}\alpha_n x^n$ is a power series and $\alpha_\infty := \lim_{n \to \infty} \alpha_n$ exists. Then
	\begin{align*}
	\lim_{x\rightarrow1^-}(1-x) F(x) &= \alpha_\infty.
	\end{align*}
\end{proposition}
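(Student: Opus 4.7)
The plan is to reduce to the case $\alpha_\infty=0$ by subtracting a geometric series, then dispose of the resulting ``vanishing coefficient'' statement by a standard $\varepsilon/N$ tail-splitting argument. I would first observe that for $|x|<1$ we have $(1-x)\sum_{n\ge0}x^n=1$, so multiplying by $\alpha_\infty$ and subtracting yields
\begin{align*}
(1-x)F(x)-\alpha_\infty
&=
(1-x)\sum_{n\ge0}\beta_n\, x^n, \qquad \beta_n := \alpha_n-\alpha_\infty.
\end{align*}
By hypothesis $\beta_n\to 0$, so it suffices to prove that $(1-x)\sum_{n\ge0}\beta_n x^n\to 0$ as $x\to 1^-$.

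Next, given $\varepsilon>0$, I would choose $N=N(\varepsilon)$ so that $|\beta_n|<\varepsilon$ for all $n\ge N$, and split the series at $N$. The head $(1-x)\sum_{n=0}^{N-1}\beta_n x^n$ is a fixed finite sum multiplied by $(1-x)$, so it tends to zero as $x\to 1^-$ (with $N$ held fixed). For the tail, the triangle inequality combined with the geometric series gives, for $x\in(0,1)$,
\begin{align*}
\left|(1-x)\sum_{n\ge N}\beta_n x^n\right|
\le
\varepsilon\,(1-x)\sum_{n\ge N}x^n
=
\varepsilon\, x^N
\le
\varepsilon.
\end{align*}
Taking $\limsup_{x\to 1^-}$ bounds $|(1-x)F(x)-\alpha_\infty|$ by $\varepsilon$, and letting $\varepsilon\to 0$ gives the claim.

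The only point worth flagging is the ``slight extension'' from real to complex coefficients mentioned in the statement. I do not expect this to be a genuine obstacle: with $x\in(0,1)$ real, the triangle inequality applied to the complex-valued series produces the same tail bounds as in the real case, and convergence of $\{\alpha_n\}$ forces boundedness of $\{|\beta_n|\}$, hence absolute convergence of $\sum_{n\ge0}\beta_n x^n$ for $|x|<1$. Thus every manipulation above remains valid, so apart from care with absolute values no new ingredient is needed beyond Rudin's classical argument.
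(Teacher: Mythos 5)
Your argument is correct and complete: the reduction to $\beta_n=\alpha_n-\alpha_\infty\to 0$ via $(1-x)\sum_{n\ge0}x^n=1$, the $\varepsilon/N$ split, the bound $(1-x)\sum_{n\ge N}x^n=x^N\le 1$ for $x\in(0,1)$, and the observation that boundedness of $(\beta_n)$ guarantees convergence of the series for $|x|<1$ all hold, and nothing changes for complex coefficients since you keep $x$ real. The paper, however, does not prove the proposition at all; it cites Theorem 8.2 of Rudin, which is Abel's limit theorem (if $\sum c_n$ converges to $s$, then $\sum c_n x^n\to s$ as $x\to 1^-$). The implicit route there is to write $(1-x)F(x)=\sum_{n\ge0}(\alpha_n-\alpha_{n-1})x^n$ with $\alpha_{-1}:=0$ and note that the telescoping series $\sum(\alpha_n-\alpha_{n-1})$ converges to $\alpha_\infty$, so Abel's theorem gives the claim. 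That derivation is shorter but inherits the need to extend Rudin's real-coefficient statement to $\mathbb{C}$ (hence the paper's phrase ``slight extension''), whereas your direct tail-splitting argument is self-contained, handles complex coefficients with no extra work, and is arguably the cleaner thing to record if one wants the proposition proved rather than quoted.
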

We finish this section with an elementary sums of roots of unity identity.

\begin{lemma}\label{lem:rootid}
	We have, for  $j\in\mathbb{Z}$,
	\begin{align*}
	\sum_{1\le k\le N-1}\frac{\zeta_N^{jk}}{1-\zeta_N^k}
	&=
	\frac{N-1}{2} + j - \left\lceil \frac{j}{N} \right\rceil N,
	\end{align*}
	where $\zeta_N:= e^{\frac{2\pi i}{N}}$.
\end{lemma}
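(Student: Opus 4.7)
The plan is to view both sides of the claimed identity as functions of $j\in\Z$ and exploit periodicity to reduce to a finite check. Both sides have period $N$ in $j$: the left-hand side because $\zeta_N^N=1$, and the right-hand side because $\lceil (j+N)/N\rceil=\lceil j/N\rceil+1$. Thus it suffices to prove the identity for $j\in\{0,1,\dotsc,N-1\}$. Denote by $g(j)$ the left-hand side.

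For the base case $j=0$, I would pair the indices $k$ and $N-k$. Using $\zeta_N^{N-k}=\zeta_N^{-k}$ and the elementary identity
$$\frac{1}{1-\zeta_N^k}+\frac{1}{1-\zeta_N^{-k}}=\frac{(1-\zeta_N^{-k})+(1-\zeta_N^{k})}{(1-\zeta_N^k)(1-\zeta_N^{-k})}=1,$$
summing over such pairs for $k=1,\dotsc,N-1$ gives $g(0)=\frac{N-1}{2}$, which matches the right-hand side (since $\lceil 0/N\rceil=0$).

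For the remaining values $1\le j\le N-1$, the right-hand side equals $j-\frac{N+1}{2}$. I would obtain this via a telescoping recursion. Compute
$$g(j+1)-g(j)=\sum_{1\le k\le N-1}\frac{\zeta_N^{jk}(\zeta_N^{k}-1)}{1-\zeta_N^k}=-\sum_{1\le k\le N-1}\zeta_N^{jk}.$$
When $j\not\equiv 0\pmod N$, the full geometric sum $\sum_{k=0}^{N-1}\zeta_N^{jk}$ vanishes, so $g(j+1)-g(j)=1$; when $j\equiv 0\pmod N$, the same sum gives $g(j+1)-g(j)=-(N-1)$. Closing the loop via periodicity $g(N)=g(0)=\frac{N-1}{2}$ and using the increments $+1$ for $j=1,\dotsc,N-1$ gives $g(N)=g(1)+(N-1)$, hence $g(1)=-\frac{N-1}{2}$, and then $g(j)=g(1)+(j-1)=j-\frac{N+1}{2}$ for $1\le j\le N-1$, as required.

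The whole argument is elementary, and the only minor subtlety is bookkeeping the anomalous increment at $j\equiv 0\pmod N$; this is resolved cleanly by using periodicity to determine $g(1)$ from $g(N)=g(0)$ rather than trying to propagate forward from $j=0$.
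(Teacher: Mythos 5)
Your proof is correct, but it follows a genuinely different route from the paper. The paper also begins by invoking periodicity in $j$ (reducing to $-N+1\le j\le 0$), but then deduces the whole identity at once from the partial fraction decomposition
\begin{equation*}
\frac{N x^{N+j-1}}{x^N-1}=\sum_{k=1}^{N}\frac{\zeta_N^{jk}}{x-\zeta_N^k},
\end{equation*}
verified by comparing principal parts and behavior at infinity, and finally letting $x\to1$ after separating the $k=N$ term. Your argument instead stays entirely at the level of finite sums: you evaluate the base case $g(0)=\frac{N-1}{2}$ by the symmetric pairing $k\leftrightarrow N-k$, and then propagate via the first difference $g(j+1)-g(j)=-\sum_{1\le k\le N-1}\zeta_N^{jk}$, which is $+1$ off the multiples of $N$ and $-(N-1)$ at them; this matches the ceiling term exactly. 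What the paper's route buys is a one-step derivation once the rational-function identity is in hand (and that identity is reusable elsewhere); what yours buys is complete elementarity, with no limit computation and no appeal to partial fractions, only geometric sums and telescoping. Two small remarks: in the pairing step, when $N$ is even the index $k=N/2$ is self-paired and contributes $\frac12=\frac{1}{1-\zeta_N^{N/2}}$, which still yields $g(0)=\frac{N-1}{2}$ (alternatively, sum the identity $\frac{1}{1-\zeta_N^k}+\frac{1}{1-\zeta_N^{-k}}=1$ over all $1\le k\le N-1$ and halve); and your closing-the-loop step is even simpler if you just apply the anomalous increment directly at $j=0$, giving $g(1)=g(0)-(N-1)=-\frac{N-1}{2}$ without needing $g(N)$.
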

\begin{proof}
By periodicity of both sides, we may assume that $-N+1\leq j\leq 0$. We note that
\begin{equation}\label{idx}
\frac{N x^{N+j-1}}{x^N -1} = \sum_{k=1}^N\frac{\zeta_N^{jk}}{x-\zeta_N^k}.
\end{equation}
This identity can be shown by proving that both sides have the same principal parts and both vanish as $x\to\infty$, thus they must then be equal.

Taking the limit $x\to 1$ in \eqref{idx} then gives the claim.
\end{proof}

\section{Proofs of main theorems}

In this section we give the proofs of our theorems and corollaries, beginning with
Theorem \ref{Theorem:GeneralizationOne}.

\begin{proof}[Proof of Theorem \ref{Theorem:GeneralizationOne}]
For $n\ge1$, it is not hard to see that
\begin{align}\label{Eq:ClosedFormForAn}
a_n(q) &= (q;q)_{n-1}\sum_{0\le j\le n-1} \frac{b^{j+1}}{(q;q)_j}.
\end{align}

In order to apply Proposition \ref{P:Appell},
we next prove that
${\displaystyle\lim_{n\rightarrow\infty}}( \frac{b-b^{n+1}}{1-b} - a_n(q))$ exists.
From \eqref{Eq:ClosedFormForAn}, we find that
\begin{align}
\notag
\left| a_n(q) \right| 
&\le 
	\sum_{0\le j\le n-1} |b|^{j+1} \left(-|q|^{j+1};|q|\right)_{n-j-1}
\le 
	(-|q|;|q|)_\infty\sum_{0\le j\le n-1} |b|^{j+1} 
\\
&=
\begin{cases}
	n(-|q|;|q|)_\infty
	&\mbox{if } |b|=1,\\[1ex]
	\displaystyle 
	(-|q|;|q|)_\infty \frac{|b|(1-|b|^n)}{1-|b|}  
	&\mbox{if } |b|\not=1.
\end{cases}
\label{boundan}
\end{align}
This implies that the series
\begin{gather*}
A(x;q) := \sum_{n\ge0} a_n(q)x^n,
\end{gather*}
is absolutely convergent for $|x|<\min(|b|^{-1},1)$. 

We set 
$$
B(x;q) := \sum_{n\in\Z} \beta_n(q) x^n,
$$ 
where
\begin{gather*}
\beta_n(q) := 0 \quad\mbox{for }n\le 0
,\qquad
\beta_n(q) := \frac{b-b^{n+1}}{1-b}-a_n(q) \quad\mbox{for }n\ge1.
\end{gather*}
From the recursion for $a_n(q)$, we have that
\begin{align*}
\beta_n(q) 
&=
	\beta_{n-1}(q)+q^{n-1}a_{n-1}(q)
.	
\end{align*}
This yields that
\begin{align}\label{btail}
\left| \beta_{n+m}(q)-\beta_{n}(q) \right|
&\le
	\sum_{n\le j\le n+m-1} \left| \beta_{j+1}(q)-\beta_{j}(q) \right|
=
	\sum_{n\le j\le n+m-1} \left|a_{j}(q)q^{j}\right|
.
\end{align}
This is the tail of a convergent series since $A$ is absolutely convergent,
and thus the $\beta_n(q)$ form a Cauchy sequence. As such,
${\displaystyle\lim_{n\rightarrow\infty}}(\frac{b-b^{n+1}}{1-b} - a_n(q))$
exists.

To prove the claimed identity in the theorem, we note that 
$\beta_{n}(q) - \beta_{n-1}(q) = q^{n-1}a_{n-1}(q)$ 
is valid for $n=1$. Thus we find that
\begin{align*}
(1-x)B(x;q)
&=
xA(xq;q).
\end{align*}
Proposition \ref{P:Appell} then yields that 
\begin{gather}\label{Eq:UseOfAppell}
\lim_{n\rightarrow\infty}\left( \frac{b-b^{n+1}}{1-b} - a_n(q) \right)
=
\lim_{x\rightarrow1^-}(1-x)B(x;q)
=
A(q;q).
\end{gather}

To finish the proof, we set
\begin{gather*}
F(x) := \sum_{n\ge1} b^n x^n.
\end{gather*}
For $|q|<|b|^{-1}$ and $m\in\N$ we have that, using the geometric series,
\begin{gather}\label{formulaF1}
F(q^{m}) = \frac{bq^{m}}{1-bq^{m}}.
\end{gather}
Using the recurrence for $a_n(q)$, we find that
\begin{gather}\label{Eq:FunctionEqForA}
A(x;q)
=
	\frac{F(x)}{1-x} - \frac{x}{1-x}A(xq;q)
.
\end{gather}
Iterating \eqref{Eq:FunctionEqForA} with $x\mapsto xq$ yields
\begin{align*}
A(x;q)
&=
\sum_{n\ge0} \frac{ (-1)^n F(xq^{n}) q^{ \frac{n(n-1)}{2}} x^{n} }{(x;q)_{n+1}}
.
\end{align*}

In particular, we obtain, using \eqref{formulaF1} and Lemma \ref{lem:Cauchy},
\begin{align*}
A(q;q)
&=	
	-b\sum_{n\ge1} 
	\frac{(-1)^n q^{\frac{n(n+1)}{2}} }{(1-bq^n)(q;q)_n}	
=
	\frac{b}{1-b}
	\left(
		1-
		\lim_{a\rightarrow\infty}
		{_2\phi_1}\left(a,b;bq;q,\frac{q}{a}\right)
	\right)
=	
	\frac{b}{1-b}
	-
	\frac{b(q;q)_\infty}{(b;q)_\infty}.
\end{align*}
With \eqref{Eq:UseOfAppell}, this finishes the proof.
Compared with the proof of Theorem 3.1 from \cite{AndrewsCrippaSimon1},
our proof makes use of the same relation between $A(x;q)$ and $F(x)$. However,
it uses Proposition \ref{P:Appell} and relies on different techniques
to evaluate $A(q;q)$.
\end{proof}

The proof of Theorem \ref{Theorem:GeneralizationTwo} mimics that
of Theorem \ref{Theorem:GeneralizationOne}, up to the calculation of
$A(q;q)$. The details are given below.

\begin{proof}[Proof of Theorem \ref{Theorem:GeneralizationTwo}]\let\qed\relax
Similar to the proof of Theorem \ref{Theorem:GeneralizationOne},
we obtain for $n\geq 1$
\begin{align}\label{Eq:Proof2ClosedFormForAn}
a_n(q) &= (q;q)_{n-1}\sum_{0\le j\le n-1} \frac{f(j+1)}{(q;q)_j}.
\end{align}

Again the proof uses Proposition \ref{P:Appell} applied to the 
series
\begin{gather*}
B(x;q) := \sum_{n\ge0} \beta_n(q) x^n,
\end{gather*}
where
\begin{gather*}
\beta_n(q): = 0 \quad\mbox{for }n\le 0
,\qquad
\beta_n(q) := \sum_{1\le \ell\le n}f(\ell) - a_n(q) \quad\mbox{for }n\ge1.
\end{gather*}
From \eqref{Eq:Proof2ClosedFormForAn}, we find that, exactly as for the proof of \eqref{boundan},
\begin{align*}
\left| a_n(q) \right| 
&\le 
	n(-|q|;|q|)_\infty\max_{1\le j \le N}|f(j)|
.
\end{align*}
As such, the series
\begin{gather*}
A(x;q) := \sum_{n\ge0} a_n(q)x^n,
\end{gather*}
is absolutely convergent for $|x|<1$. 
As above, we may show that for $n\geq 1$, $\beta_{n+1}(q) - \beta_{n}(q) = q^{n}a_{n}(q)$. 
Proceeding as in \eqref{btail} yields that
${\displaystyle\lim_{n\rightarrow\infty}}\left( \sum_{1\le \ell\le n}f(\ell) - a_n(q) \right)$
exists.

Again we find that by Proposition \ref{P:Appell} that
\begin{gather*}
\lim_{n\rightarrow\infty}\left( \sum_{1\le \ell\le n}f(\ell) - a_n(q) \right)
=
\lim_{x\rightarrow1^-}(1-x)B(x;q)
=
A(q;q).
\end{gather*}

To compute $A(q;q)$, we set
\begin{gather*}
F(x) := \sum_{n\ge1} f(n) x^n,
\end{gather*}
to obtain
\begin{gather*}
A(q;q)
=
	\sum_{n\ge0} \frac{ (-1)^n F\left(q^{n+1}\right) q^{\frac{n(n+1)}{2}}}{(q;q)_{n+1}}
.
\end{gather*}
The calculations now differ from that of the proof of Theorem \ref{Theorem:GeneralizationOne}. Since
\begin{gather*}
\sum_{0\le k\le N-1} \zeta_N^{(n-j)k}
=
\begin{cases}
	N & \mbox{if } n\equiv j \pmod{N}
	,\\
	0 & \mbox{otherwise},
\end{cases}
\end{gather*}
we have
\begin{gather}\label{formulafn}
f(n)
=
\sum_{1\le j\le N} \frac{f(j)}{N}
\sum_{0\le k\le N-1} \zeta_N^{(n-j)k}.
\end{gather}
Thus, for $|x|<1$, we obtain
\begin{align*}
F(x)
&=
\sum_{0\le k\le N-1}
\frac{c_k  x}{1-\zeta_N^k x}
.
\end{align*}

To complete the proof, we compute, summing the $_2\phi_1$ terms with Lemmas \ref{lem:S0} and \ref{lem:Cauchy} 
\begin{align*}
A(q;q)
&=
	-c_0\sum_{n\ge1} \frac{(-1)^n q^{\frac{n(n+1)}{2}}}{(1-q^n)(q;q)_n}
	-
	\sum_{1\le k\le N-1} c_k
	\sum_{n\ge1}
	\frac{(-1)^n q^{\frac{n(n+1)}{2}}}{(1-\zeta_N^kq^n)(q;q)_n}
\\
&=
	c_0S_0(q)
	+
	\sum_{1\le k\le N-1} \frac{c_k}{1-\zeta_N^k}
	-
	(q;q)_\infty
	\sum_{1\le k\le N-1} 
	\frac{c_k}{(\zeta_N^k;q)_\infty}.
\end{align*}
\end{proof}
\vspace{-2em}\qed
\vspace{2em}

With the proof of Theorem \ref{Theorem:GeneralizationTwo} finished, we can
now easily prove Corollary \ref{cor:recur}.

\begin{proof}[Proof of Corollary \ref{cor:recur}]
To prove the statement of the corollary, we rewrite the terms in Theorem \ref{Theorem:GeneralizationTwo}. 
Firstly, Lemma \ref{lem:rootid} gives that
\begin{align*}
\sum_{1\le k\le N-1}\frac{c_k}{1-\zeta_N^k}
&=
	\frac{1}{N}
	\sum_{1\le j\le N}
	f(j)
	\left( \frac{N+1}{2}-j \right).
\end{align*}
Similarly, we obtain, using Lemma \ref{lem:xqsum}, Lemma \ref{lem:S0}, and Lemma \ref{lem:rootid},
\begin{multline*}
\sum_{1\le k\le N-1}\frac{c_k}{(\zeta_N^k;q)_\infty}
=
	\frac{1}{N(q;q)_\infty}
	\sum_{1\le j\le N} f(j)
	\left( \frac{N+1}{2} - j \right)
	+
	\frac{S_0(q)}{N(q;q)_\infty}
	\sum_{1\le j\le N} f(j)
	\\\quad
	-
	\sum_{n\ge0}\frac{q^n}{(q;q)_n}
	\sum_{1\le j\le N} f(j) \left\lceil\frac{n+1-j}{N}\right\rceil
.
\end{multline*}
Plugging these values into the statement of Theorem 
\ref{Theorem:GeneralizationTwo} yields the claim.
\end{proof}

\section{Two Examples}

Choosing $f(n) = (-1)^n$, Theorem \ref{Theorem:GeneralizationTwo} and Corollary \ref{cor:recur} directly imply the following.
\begin{corollary}\label{cor:imply}
Suppose that $a_n(q)$ is defined recursively by $a_n(q)=(-1)^n+(1-q^{n-1})a_{n-1}(q)$ for $n\in\mathbb{N}$ and $a_0(q)=0$.
Then we have
\begin{equation*}
\lim_{n\to \infty} \left(\sum_{1\leq \ell\leq n} (-1)^\ell - a_n(q)\right) 
= 
	-\frac{1}{2}-\frac{(q;q)_\infty}{2(-q;q)_\infty}
=
	\sum_{n\geq 1} (-1)^n q^{n^2} 
= 
	-(q;q)_\infty \sum_{n\geq 0} \frac{q^{2n+1}}{(q;q)_{2n+1}}.
\end{equation*}
\end{corollary}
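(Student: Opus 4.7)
The plan is to specialise both Theorem~\ref{Theorem:GeneralizationTwo} and Corollary~\ref{cor:recur} to the periodic sequence $f(n)=(-1)^n$ (period $N=2$) and then clean up using the product identities recorded in Section~2. Since two independent representations of the limit are asserted, I would prove the first closed form and the last Lambert-type sum from the two general results separately, and then bridge the two via a direct application of Jacobi's identity.

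For the first equality, I would invoke Theorem~\ref{Theorem:GeneralizationTwo} with $\zeta_2=-1$. The Fourier coefficients compute immediately: $c_0=\tfrac{1}{2}(-1+1)=0$ and $c_1=\tfrac{1}{2}((-1)\cdot 1+1\cdot(-1))=-1$. Since $c_0=0$, the $S_0(q)$ term drops out, and only $-(q;q)_\infty\cdot c_1/(-1;q)_\infty$ and $c_1/(1-(-1))$ survive. Using the elementary factorisation $(-1;q)_\infty=(1-(-1))(-q;q)_\infty=2(-q;q)_\infty$, these two pieces collapse to the first closed-form expression stated in the corollary.

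For the middle equality I would use the product identity $(q;q)_\infty/(-q;q)_\infty=\sum_{n\in\Z}(-1)^n q^{n^2}$ recalled in Section~2. Splitting the bilateral sum at $n=0$ and pairing $n$ with $-n$ for $n\ge 1$ gives $\sum_{n\in\Z}(-1)^n q^{n^2}=1+2\sum_{n\ge 1}(-1)^n q^{n^2}$; solving for the one-sided sum then converts the first closed form into $\sum_{n\ge 1}(-1)^n q^{n^2}$.

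For the third equality I would instead apply Corollary~\ref{cor:recur} with $N=2$, $f(1)=-1$, $f(2)=1$. The inner sum reduces to $-\lceil n/2\rceil+\lceil(n-1)/2\rceil$; a quick case-check shows it vanishes for even $n$ and equals $-1$ for odd $n$. Keeping only $n=2m+1$ with $m\ge 0$ yields exactly $-(q;q)_\infty\sum_{m\ge 0}q^{2m+1}/(q;q)_{2m+1}$. The whole argument is essentially a substitution, and I anticipate no real obstacle beyond correctly evaluating the ceiling-function coefficients and being careful with the sign conventions when extracting $\sum_{n\ge 1}(-1)^n q^{n^2}$ from the bilateral theta series.
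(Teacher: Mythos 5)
Your route is exactly the paper's: the paper offers no separate argument for this corollary beyond observing that Theorem \ref{Theorem:GeneralizationTwo} and Corollary \ref{cor:recur} with $f(n)=(-1)^n$, $N=2$ directly imply it, and your computations of $c_0=0$, $c_1=-1$, of $(-1;q)_\infty=2(-q;q)_\infty$, and of the ceiling coefficients ($0$ for even $n$, $-1$ for odd $n$, so only the odd-index terms survive in Corollary \ref{cor:recur}) are all correct.

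The one step that fails as written is the assertion that the two surviving terms ``collapse to the first closed-form expression stated in the corollary.'' Substituting $c_1=-1$ into Theorem \ref{Theorem:GeneralizationTwo} gives
\[
-(q;q)_\infty\,\frac{c_1}{(-1;q)_\infty}+\frac{c_1}{1-(-1)}
=\frac{(q;q)_\infty}{2(-q;q)_\infty}-\frac{1}{2},
\]
i.e.\ the product term comes with a \emph{plus} sign, whereas the displayed statement has $-\frac12-\frac{(q;q)_\infty}{2(-q;q)_\infty}$. Your own bridge via the bilateral theta series confirms this: from $\frac{(q;q)_\infty}{(-q;q)_\infty}=1+2\sum_{n\ge1}(-1)^nq^{n^2}$ one solves to get $\sum_{n\ge1}(-1)^nq^{n^2}=\frac{(q;q)_\infty}{2(-q;q)_\infty}-\frac12$, while the printed expression equals $-1-\sum_{n\ge1}(-1)^nq^{n^2}$. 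So the first member of the chain as stated carries a sign error (sanity check: the limit begins $-q+q^4-\cdots$, e.g.\ $\beta_2(q)=-q$, whereas the printed expression begins $-1+q-\cdots$). Your argument is correct and complete once the first expression is written as $-\frac12+\frac{(q;q)_\infty}{2(-q;q)_\infty}$; as it stands, however, your proposal claims the specialization reproduces the printed form, which is false, so you must correct or at least flag the sign rather than assert the match. The middle and last equalities are fine as you have them.
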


Furthermore, letting $q\mapsto q^2$ and setting $b=q$ in Theorem \ref{Theorem:GeneralizationOne}
gives the following related identity.
\begin{corollary}\label{cor:recur2}
Suppose that $a_n(q^2)$ is defined recursively by
$a_n(q^2) = q^n +(1-q^{2n-2})a_{n-1}(q^2)$ for $n\in\N$ and $a_0(q^2) = 0$.
Then we have
\begin{equation*}
q^{-1}
\lim_{n\to \infty} a_n\left(q^2\right) 
= 
	\frac{\left(q^2;q^2\right)_\infty}{\left(q;q^2\right)_\infty}
	=
	\sum_{n\geq 0} q^{\frac{n(n+1)}{2}} 
.
\end{equation*}
\end{corollary}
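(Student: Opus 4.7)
The plan is to apply Theorem \ref{Theorem:GeneralizationOne} directly with the substitutions $q \mapsto q^2$ and $b \mapsto q$. Under these substitutions, the recursion $a_n(q) = b^n + (1-q^{n-1})a_{n-1}(q)$ of Theorem \ref{Theorem:GeneralizationOne} becomes $a_n(q^2) = q^n + (1-q^{2n-2})a_{n-1}(q^2)$, which is precisely the recursion in the corollary. The hypothesis of Theorem \ref{Theorem:GeneralizationOne} requires $|q^2| < \min(|q|^{-1},1)$, which amounts to $|q|^3 < 1$ and $|q|^2 < 1$, hence is satisfied whenever $|q| < 1$.

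Next, I would substitute into the conclusion of Theorem \ref{Theorem:GeneralizationOne}. This yields
\[
\lim_{n\to\infty}\left(\sum_{1\le\ell\le n} q^\ell - a_n(q^2)\right) = \frac{q}{1-q} - \frac{q(q^2;q^2)_\infty}{(q;q^2)_\infty}.
\]
Since $|q|<1$, the geometric series gives $\sum_{1\le\ell\le n} q^\ell = \frac{q-q^{n+1}}{1-q} \to \frac{q}{1-q}$, so the left-hand side simplifies to $\frac{q}{1-q} - \lim_{n\to\infty} a_n(q^2)$. Cancelling and dividing through by $q$ gives
\[
q^{-1}\lim_{n\to\infty} a_n(q^2) = \frac{(q^2;q^2)_\infty}{(q;q^2)_\infty},
\]
which is the first equality claimed.

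Finally, the second equality follows by invoking the product expansion $\frac{(q^2;q^2)_\infty}{(q;q^2)_\infty} = \sum_{n\ge 0} q^{n(n+1)/2}$ recalled in the preliminaries (a classical identity due to Gauss). There is no substantive obstacle in this argument: the corollary is essentially a direct specialization of Theorem \ref{Theorem:GeneralizationOne} combined with a named product identity, so the work consists only of verifying that the variable substitution produces the recursion in the stated form and that the partial sums of the geometric series $\sum q^\ell$ converge in the intended range.
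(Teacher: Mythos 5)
Your proposal is correct and follows exactly the paper's route: the paper also obtains Corollary \ref{cor:recur2} by letting $q\mapsto q^2$ and setting $b=q$ in Theorem \ref{Theorem:GeneralizationOne}, then invoking the Gauss product expansion $\frac{(q^2;q^2)_\infty}{(q;q^2)_\infty}=\sum_{n\ge0}q^{\frac{n(n+1)}{2}}$ recalled in the preliminaries. Your added checks (the domain condition $|q^2|<\min(|q|^{-1},1)$ and the convergence of the geometric partial sums) are fine and only make the specialization explicit.
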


\end{document}